\documentclass[11pt,oneside]{amsart}

\usepackage{amssymb}
\usepackage{amsmath}
\usepackage{amsthm}
\usepackage[all]{xy}

\setlength{\topmargin}{25mm}
\addtolength{\topmargin}{-1in}
\setlength{\oddsidemargin}{34mm}
\addtolength{\oddsidemargin}{-1in}
\setlength{\evensidemargin}{34mm}
\addtolength{\evensidemargin}{-1in}
\setlength{\textwidth}{142mm}
\setlength{\textheight}{222mm}

\theoremstyle{plain}
\newtheorem{thm}{Theorem}[section]
\newtheorem{prop}[thm]{Proposition}
\newtheorem{lem}[thm]{Lemma}
\newtheorem{cor}[thm]{Corollary}

\theoremstyle{definition}
\newtheorem{defn}[thm]{Definition}
\newtheorem{eg}[thm]{Example}

\theoremstyle{remark}
\newtheorem{rem}[thm]{Remark}

\DeclareMathOperator{\Pic}{Pic}
\DeclareMathOperator{\Cl}{Cl}

\DeclareMathOperator{\Nef}{Nef}
\DeclareMathOperator{\Eff}{Eff}
\DeclareMathOperator{\Mov}{Mov}

\DeclareMathOperator{\codim}{codim}

\DeclareMathOperator{\Bs}{Bs}

\DeclareMathOperator{\Cone}{Cone}

\DeclareMathOperator{\Sym}{Sym}

\def\Z{\mathbb{Z}}
\def\Q{\mathbb{Q}}
\def\R{\mathbb{R}}
\def\C{\mathbb{C}}

\def\r+{\mathbb{R}_{\geq 0}}

\def\r+{{\R}_{\geq 0}}
\def\q+{{\Q}_{\geq 0}}
\def\P{\mathbb{P}}
\def\arw{\rightarrow}
\def\*c{\C^{\times}}

\newcommand{\calo}{\mathcal {O}}

\makeatletter
  
  \@addtoreset{equation}{section}
\makeatother

\begin{document}

\title{Examples of Mori dream spaces with Picard number two}
\author{Atsushi Ito} 
\address{Graduate School of Mathematical Sciences, 
The University of Tokyo, 3-8-1 Komaba, 
Meguro, Tokyo, 153-8914, Japan.}
\email{itoatsu@ms.u-tokyo.ac.jp}

\begin{abstract}
In this note,
we give a sufficient condition such that a projective variety with Picard number two is a Mori dream space.
Using this condition,
we obtain examples of Mori dream spaces with Picard number two.
\end{abstract}

\subjclass[2010]{14C20, 14M99}
\keywords{Mori dream space, small $\Q$-factorial modification}

\maketitle

\section{Introduction}\label{section_intro}

Mori dream spaces, which were introduced by Hu and Keel in \cite{HK},
are special varieties which have very nice properties in view of the minimal model program.
In the paper,
Hu and Keel investigated properties of Mori dream spaces,
especially those related to GIT.

\vspace{2mm}
We recall the definition of Mori dream spaces.
For a normal $\Q$-factorial projective variety $X$,
we set 
\[
N^1(X)_K :=(\Cl (X) / \equiv ) \otimes_{\Z} K =  (\Pic(X) / \equiv ) \otimes_{\Z} K
\]
for $K=\Q$ or $\R$.
We denote by $\Eff(X)$, $\Mov(X)$, and $\Nef(X)$
the cones in $N^1(X)_{\R}$ generated by effective, movable, and nef divisors respectively.

\begin{defn}\label{def_SQM}
By a \textit{small $\Q$-factorial modification (SQM)}
of a projective variety $X$,
we mean a birational map $f : X \dashrightarrow X'$
with $X'$ projective, normal, and $\Q$-factorial,
such that $f$ is an isomorphism in codimension one.
\end{defn}

\begin{defn}\label{def_MDS}
A normal projective variety $X$ is called
a \textit{Mori Dream Space} if the following hold:
\begin{itemize}
\item[i)] $X$ is $\Q$-factorial and $\Pic(X)_{\Q} = N^1(X)_{\Q}$,
\item[ii)] $\Nef(X)$ is the affine hull of finitely many semiample line bundles,
\item[iii)] There is a finite collection of SQMs $f_i : X \dashrightarrow X_i$ such that each $X_i$ satisfies ii)
and $\Mov(X) = \bigcup_i f_i^{*}(\Nef(X_i))$.
\end{itemize}
\end{defn}

Quasi-smooth projective toric varieties are typical examples of Mori dream spaces.
In \cite[Corollary 1.3.2]{BCHM},
it is shown that $\Q$-factorial log Fano varieties are Mori dream spaces. 
See \cite{AHL}, \cite{Jo}, \cite{TVV}, etc. for other examples.

\vspace{1mm}
By definition,
a normal $\Q$-factorial projective variety $X$ is a Mori dream space if $\Pic(X)_{\Q} \cong \Q$.
Then,
how about the case when the Picard number is two?
The following theorem gives a sufficient condition for $X$ to be a Mori dream space when the Picard number is two.

\begin{thm}\label{intro thm}
Let $X$ be a normal $\Q$-factorial projective variety with Picard number $2$ and $\Pic(X)_{\Q} = N^1(X)_{\Q}$.
Assume that
there exist nonzero effective Weil divisors $D_1,\ldots, D_r$ and $D'_1,\ldots, D'_{r'}$ on $X$ 
for some $2 \leq r,r' \leq \dim X$ such that
\begin{itemize}
\item[a)] $\Cone (D_1,\ldots,D_{r}) \cap \Cone(D'_1,\ldots, D'_{r'}) = \{ 0 \} $,
\item[b)] $D_1 \cap \cdots \cap D_{r}= D'_1 \cap \cdots \cap D'_{r'}= \emptyset$.
\end{itemize}
Then $X$ is a Mori dream space.
\end{thm}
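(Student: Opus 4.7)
The plan is to verify conditions (ii) and (iii) of Definition~\ref{def_MDS}, since (i) is assumed. The Picard-number-two hypothesis is what makes the argument tractable: $N^1(X)_\R \cong \R^2$, so the cones $\Nef(X) \subseteq \overline{\Mov(X)} \subseteq \overline{\Eff(X)}$ are planar with at most two extremal rays apiece, and the Mori chamber decomposition of $\overline{\Mov(X)}$ is just a division of this planar cone by finitely many rays. Set $C := \Cone([D_1],\ldots,[D_r])$ and $C' := \Cone([D'_1],\ldots,[D'_{r'}])$; by hypothesis (a) these are closed pointed cones meeting only at the origin, hence separated by a line through $0$ in $\R^2$. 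Both are contained in the pointed cone $\overline{\Eff(X)}$, on opposite sides of it.

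The main technical input is hypothesis (b). Since $\bigcap_i D_i = \emptyset$ and $r \le \dim X$, after passing to a common multiple $m$ so that each $mD_i$ is Cartier, the sections $s_i^m \in H^0(X,\calo_X(mD_i))$ have no common zero, and the associated Koszul-type complex is expected to be acyclic in the relevant degrees on the smooth locus of $X$. From this I would deduce: first, every effective class on $X$ must lie in $C \cup C'$, so $\overline{\Eff(X)} = C + C'$ is rational polyhedral with extremal rays generated by some $[D_i]$ and some $[D'_j]$; second, classes in the interior of $C$ (respectively $C'$) have finitely generated section rings, yielding semi-ample classes on small modifications of $X$ corresponding to the boundary rays of $\overline{\Mov(X)}$. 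An analogous argument handles $C'$.

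Given these inputs, the MDS structure drops out: there are at most two SQMs $f_i : X \dashrightarrow X_i$ attached to the boundary rays of $\overline{\Mov(X)}$, and their nef cones, pulled back to $X$, together with $\Nef(X)$ itself, cover $\overline{\Mov(X)}$, giving condition (iii). For condition (ii), each extremal ray of $\Nef(X)$ is either an extremal ray of $\overline{\Mov(X)}$ (semi-ample on $X$ by the above) or a wall shared with a neighbouring $\Nef(X_i)$ (semi-ample by construction of the SQM). The main obstacle is the Koszul / finite-generation step: converting the purely geometric condition (b) into finite generation of the relevant section rings, and hence into actual small $\Q$-factorial modifications, without additional smoothness or Cohen--Macaulayness assumptions on $X$. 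The bound $r \le \dim X$ is crucial here, since it is what makes the Koszul sequence behave like a resolution and thereby controls the higher-dimensional base loci of $|m D_i|$ (and $|m D'_j|$) in codimension $\geq 2$.
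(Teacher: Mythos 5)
Your setup (planar cones in $N^1(X)_\R$, separated pointed cones $C$ and $C'$, reduction to conditions ii) and iii) of Definition~\ref{def_MDS}) matches the paper's starting point, but the proposal has a genuine gap and two substantive errors. The gap is the one you name yourself: you never construct the SQMs, deferring everything to a Koszul/finite-generation argument that is only ``expected'' to work. That step is the entire content of the theorem. The paper does it by hand (Proposition~\ref{exist_SQM}): after isolating a subchain $D_1\succeq_A\cdots\succeq_A D_{k+1}$ satisfying condition $(*)$, it blows up the scheme-theoretic intersection $\bigcap_i ma_iD_i$, maps out by the base-point-free system $|\phi^*(mD_k)-E|$, and checks that the (normalized) image is $\Q$-factorial and isomorphic to $X$ in codimension one. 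The codimension estimate uses exactly the numerics you gesture at ($r\le\dim X$), but through the elementary observation that the exceptional set embeds in a $\P^{k-1}$-bundle over $\varphi_{|mD_{k+1}|}(D_1\cap\cdots\cap D_k)$, whose dimension is at most $\dim X-k-1$ by ampleness of the pushed-forward divisors on the image (Lemma~\ref{exist_of edge}) --- no Koszul resolution or Cohen--Macaulay input is needed. Similarly, semiampleness of the extremal nef ray is immediate: $mD_r\sim m\alpha_iD_i+m\beta_iA$ with $m\beta_iA$ free forces $\Bs|mD_r|\subset\bigcap_iD_i=\emptyset$.

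The two errors: first, your claim that every effective class lies in $C\cup C'$ (hence $\overline{\Eff(X)}=C+C'$) is false and does not follow from a), b). Already in the paper's worked example (blow-up of a degree-$(n+1)$ hypersurface at a point), $2H-E$ is effective but lies in neither cone; more seriously, in Case 2 of the paper's proof the extremal ray of $\Eff(X)$ is spanned by the exceptional prime divisor of the birational contraction $\varphi_{|mD_{k+1}|}$, which is found only at the end of the induction and need not be one of the given $D_i$, $D'_j$. Second, ``at most two SQMs'' is wrong: the movable cone decomposes into a chain of chambers $\Nef(X^{(0)}),\ldots,\Nef(X^{(m)})$ obtained by iterating the flip construction, with $m$ bounded only by $r-1$; in the example above there are $n-1$ chambers. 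The Picard-number-two hypothesis makes each wall-crossing a single ray, but it does not bound the number of walls by the number of extremal rays of $\overline{\Mov(X)}$.
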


\vspace{3mm}
As corollaries of Theorem \ref{intro thm},
we obtain the following examples,
which are special cases of Corollary \ref{cor_c.i.}.
In both cases,
we will construct divisors $D_i,D'_j$ as in Theorem \ref{intro thm}
by using the defining equations of $X$ or $Z$.
We note that Corollary \ref{cor 1} is proved by Oguiso (at least in the case $n=3$) in his private note \cite{Og}.

\begin{cor}[\cite{Og}]\label{cor 1}
Let $X \in |\calo_{\P^1 \times \P^n }(a,b)|$ be a general hypersurface on $\P^1 \times \P^n $
such that $a,b >0$ and $n \geq \max\{ a, 3\}$.
Then
$X$ is a Mori dream space.
\end{cor}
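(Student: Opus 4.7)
The plan is to apply Theorem \ref{intro thm} to $X$. First I would verify the basic hypotheses: a general $X$ is smooth by Bertini (hence normal and $\Q$-factorial), and since $\dim X = n \geq 3$ with $\mathcal{O}(a,b)$ ample, the Lefschetz hyperplane theorem gives $\Pic(X) \cong \Pic(\P^1 \times \P^n) = \Z H_1 \oplus \Z H_2$, where $H_i$ is the pullback of the hyperplane class from the $i$-th factor. So $X$ has Picard number $2$ and $\Pic(X)_{\Q}=N^1(X)_{\Q}$. For the first set of divisors I would take $D_1, D_2$ to be two distinct fibers of $\pi_1 \colon X \to \P^1$; these are automatically disjoint and both of class $H_1$, so $r=2$ works.

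The main work is producing the $D'_j$'s. Write the defining equation as $F = \sum_{i=0}^a s^i t^{a-i} G_i(x)$ with $G_i \in H^0(\P^n, \mathcal{O}(b))$, and set $f_\alpha(x) := \sum_i \alpha^i G_i(x)$ for $\alpha \in \P^1$. The polynomial identity $\sum_i u^i G_i = f_\alpha + (u-\alpha) q_\alpha$ combined with $F = t^a \sum_i (s/t)^i G_i$ yields $F = t^a f_\alpha(x) + (s - \alpha t)\tilde q_\alpha(s,t,x)$ for some $\tilde q_\alpha$ of bidegree $(a-1,b)$, so the divisor $\{f_\alpha=0\}\cap X$ of class $bH_2$ splits on $X$ as $\bigl(X \cap \{s=\alpha t\}\bigr) + R_\alpha$ with $R_\alpha$ an effective divisor of class $bH_2 - H_1$. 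I would fix $a$ distinct $\alpha_1,\dots,\alpha_a \in \P^1$ and take $D'_j := R_{\alpha_j}$ for $j=1,\dots,a$; when $n > a$, I would augment with $n-a$ general hyperplane sections $X \cap \{h_k=0\}$ of class $H_2$, giving $r'=n$.

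Verifying $\bigcap_j D'_j = \emptyset$ is the heart of the argument. A Vandermonde computation identifies the common zero locus $W \subset \P^n$ of $f_{\alpha_1},\dots,f_{\alpha_a}$ with the preimage of a single point under the rational map $[G_0 : \cdots : G_a] \colon \P^n \dashrightarrow \P^a$, so on $\P^1 \times W$ the polynomial $F$ factors as $\lambda(x) \cdot c \prod_{j=1}^a (s-\alpha_j t)$ with $\lambda$ vanishing on $V := \{G_0 = \cdots = G_a = 0\}$. Removing the fiber components then yields $\bigcap_j R_{\alpha_j} = \P^1 \times V$ set-theoretically. For $n=a$ this is empty, since $V$ is a complete intersection of $a+1$ hypersurfaces in $\P^a$ and hence empty for general $X$; for $n > a$ the hyperplane sections cut $\P^1 \times V$ down to $\P^1 \times(V \cap L)$ with $L \subset \P^n$ of codimension $n-a$, which is empty by the codimension count $(a+1)+(n-a) > n$. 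The cone condition is immediate: $\Cone(D_1, D_2) = \R_{\geq 0} H_1$, while $\Cone(D'_j)$ is generated by $bH_2-H_1$ and $H_2$ and lies in $\{p \leq 0\}$ in the $(H_1, H_2)$-plane, so the two cones meet only at the origin.

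The main obstacle I anticipate is the construction of divisors of class $bH_2 - H_1$. By Krull's theorem and the ampleness of $X$ in $\P^1 \times \P^n$, no collection of $r' \leq n$ effective divisors on $X$ obtained by restricting ambient hypersurfaces can have empty intersection; so one cannot avoid producing effective divisors in a class such as $bH_2 - H_1$ that is not effective on the ambient product, and the residual construction above is exactly how the defining equation of $X$ is exploited to do so.
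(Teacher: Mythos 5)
Your proposal is correct and follows essentially the same route as the paper: both apply Theorem \ref{intro thm} with two disjoint fibers of the projection to $\P^1$ on one side and, on the other, $a$ effective divisors of class $bH_2-H_1$ extracted from the defining equation of $X$ together with $n-a$ pullbacks of general hyperplanes, concluding with the same dimension count. The only difference is cosmetic: the paper builds its residual divisors from the partial sums $u^{i-1}f_0+\cdots+v^{i-1}f_{i-1}$ (subtracting $(v^i=0)|_X$) rather than from the evaluations $f_\alpha=\sum_i\alpha^i G_i$ (subtracting the fiber over $\alpha$), but the two families have the same classes and the same common intersection $\P^1\times\{G_0=\cdots=G_a=0\}$.
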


\begin{cor}\label{blowup of Fano}
Let $Z \subset \P^N$ be a complete intersection of general hypersurfaces of degrees $d_1, \ldots,d_s$,
and let $X \arw Z$ be the blow up at a general point $p \in Z$.
If $Z$ is a Fano variety,
i.e.,
if the anticanonical bundle $-K_Z = \calo_Z( N +1 - \sum_{i=1}^s d_i)$ is ample,
then $X$ is a Mori dream space.
\end{cor}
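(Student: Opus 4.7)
I would verify the hypotheses of Theorem~\ref{intro thm} for $X = \mathrm{Bl}_p Z$. Denote $n = \dim Z = \dim X$; for a general complete intersection with $n \ge 3$, the Lefschetz hyperplane theorem gives $\Pic(Z) = \Z\cdot\calo_Z(1)$, so $\Pic(X) = \Z H \oplus \Z E$ with $H := \pi^*\calo_Z(1)$ and $E$ the exceptional divisor, whence $X$ has Picard number two.

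For the first family, I would take $D'_1 := E$ and $D'_2 := H$, the latter realized as the pullback of a general hyperplane section of $Z$ avoiding $p$. These are disjoint because $E$ lies over $p$ while $D'_2$ is disjoint from $\pi^{-1}(p)$; thus $r' = 2$, and $\Cone(E, H)$ fills the half of $N^1(X)_\R$ with nonnegative $E$-coefficient.

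For the second family, I would begin by choosing coordinates on $\P^N$ so that $p = (1:0:\cdots:0)$, using the smoothness of $Z$ at $p$ to arrange (after a linear change) that the defining equations $F_1,\ldots,F_s$ of $Z$ have linear parts $x_1,\ldots,x_s$ at $p$, whence $T_p Z$ is cut out by $x_1 = \cdots = x_s = 0$. I would then construct $n$ hypersurfaces $G_1,\ldots,G_n \subset \P^N$ passing through $p$, built from the remaining coordinates $x_{s+1},\ldots,x_N$ and suitable multiples of the $F_j$. Setting $D_i$ to be the strict transform of $G_i|_Z$ produces effective divisors of classes $a_i H - b_i E$ with $b_i > 0$, so $\Cone(D_1,\ldots,D_n)$ lies in the negative-$E$ half of $N^1(X)_\R$, meeting $\Cone(E, H)$ only at the origin; this verifies condition (a).

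The substantive point is ensuring $\bigcap_i D_i = \emptyset$ on $X$. On $E \simeq \P(T_p Z)$ the intersection is the common zero locus in $\P^{n-1}$ of the tangent hyperplanes of the $G_i|_Z$ at $p$, and is empty as soon as the linear parts of the $G_i$ span the cotangent space $T_p Z^\vee$. Off $E$, one needs $\bigcap_i G_i|_Z \subseteq \{p\}$ on $Z$, where Bezout's theorem creates the main obstacle: a naive choice of $G_i$ leaves residual intersection points contributing to $\prod a_i \cdot \deg Z$. This is where the Fano hypothesis $\sum_j d_j \le N$ enters, providing the degree flexibility to modify the $G_i$ by multiples of the $F_j$ (which vanish on $Z$ but not in $\P^N$) so as to absorb the extraneous intersection in $\P^N$. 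Once this is done, Theorem~\ref{intro thm} applies and concludes that $X$ is a Mori dream space.
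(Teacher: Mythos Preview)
Your overall strategy---verify the hypotheses of Theorem~\ref{intro thm} directly---is the paper's as well, and your choice $D'_1 = E$, $D'_2 = H$ agrees (up to relabeling) with what falls out of Corollary~\ref{cor_c.i.}. The gap is in the other family: you never actually construct the $D_i$, and the fix you sketch cannot work as stated. Adding a multiple of $F_j$ to $G_i$ does not change $G_i|_Z$ at all, since $F_j|_Z = 0$; so this operation cannot ``absorb the extraneous intersection'' on $Z$, only in the ambient $\P^N$, which is irrelevant. What one needs are divisors on $X$ that genuinely exploit the relation $F_j|_Z = 0$, not hypersurfaces in $\P^N$ modified by something that restricts to zero.

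The paper supplies this via a structural observation you are missing: $\mathrm{Bl}_p\P^N$ is the $\P^1$-bundle $\P_{\P^{N-1}}(\calo\oplus\calo(1))$ (projection from $p$), with $\mu^*\calo(1)$ the tautological bundle and $\mu^*\calo(1)-E=\pi^*\calo_{\P^{N-1}}(1)$. Thus $X$ is a complete intersection of general members of $|\calo_\P(d_j-1)\otimes\pi^*\calo(1)|$, and Corollary~\ref{cor_c.i.} applies with $a_j=d_j-1$. The proof of that corollary writes each defining section as $f^j=\sum_i u^{a_j-i}v^i f_i^j$ and takes the \emph{partial sums} $(u^{i-1}f_0^j+\cdots+v^{i-1}f_{i-1}^j)|_X-(v^i=0)|_X$; these are effective precisely because $f^j|_X=0$, and their common zero locus is $\pi^{-1}\bigl(\bigcap_{i,j}(f_i^j=0)\bigr)$, whose dimension is controlled exactly when $n\ge\sum(d_j-1)$. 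So the Fano hypothesis enters as a dimension count on $\P^{N-1}$, not as ``degree flexibility'' in $\P^N$. You should also treat $\dim X=2$ separately (Lefschetz fails; the paper observes $X$ is then del Pezzo).
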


\begin{rem}
Let $X \in |\calo_{\P^k \times \P^{n+1-k} }(a,b)|$ be a normal $\Q$-factorial hypersurface on $\P^k \times \P^{n+1-k} $
such that $n \geq 3$ and $a,b >0$.
By the Lefschetz hyperplane theorem,
it holds that $\Pic(X) \cong \Pic( \P^k \times \P^{n+1-k}) $.

If $2 \leq k \leq n-1$,
$\calo_X(1,0) $ and $ \calo_X(0,1) $ are semiample and not big.
Hence $X$ is a Mori dream space since $\Eff(X)=\Mov(X)=\Nef(X)= \r+ \calo_X(1,0) + \r+  \calo_X(0,1) $.
We note that this is a special case of \cite[Corollary 2]{Jo}.
Since $K_X= \calo_X(a -k-1, b-(n+2-k))$,
it holds that
\begin{align*}
\kappa(X) = \left\{ 
\begin{array}{cl} 
 - \infty &  \text{for } a \leq k \text{ or } b \leq n+1 -k, \\
 0 & \text{for }  a= k+1, b= n+2 -k,\\
 k  & \text{for }  a \geq k+2, b= n+2 -k,\\
 n & \text{for }  a \geq k+2, b \geq  n+3 -k,\\
\end{array} \right.
\end{align*}
where $\kappa(X)$ is the Kodaira dimension of $X$.
Thus $\kappa(X)$ can be any number in $\{ - \infty, 0, 1, 2, \cdots,n\}$ except $1$
by choosing suitable $k, a,b$.

On the other hand,
a general hypersurface $X \subset \P^1 \times \P^{n}$ in $|\calo_{\P^1 \times \P^n}(3,n+1)|$ for $n \geq 3$
is a Mori dream space with $\kappa(X)=1$ by Corollary \ref{cor 1}.

Hence for any $n \geq 3$ and $\kappa \in \{ - \infty, 0, 1, \cdots,n\} $,
there exists an $n$-dimensional smooth Mori dream space with $\kappa(X)=\kappa$ and Picard number $2$.
This is shown in \cite{Og} when $n=3$.
\end{rem}

\begin{rem}
In a recent paper \cite{Ot},
Ottem independently studied hypersurfaces in $\P^1 \times \P^n$ more precisely by a different argument.
Ottem determines for which $a,b >0$,
a (very) general hypersurface $X \in |\calo_{\P^1 \times \P^n }(a,b)|$
is a Mori dream space,
and describes the Cox ring of such $X$ when $X$ is a Mori dream space.
\end{rem}

\subsection*{Notation}
Throughout this note,
we work over the complex number field $\C$.
A divisor means a Weil divisor.
For a base point free divisor $D$,
we denote by $\varphi_{|D|}$ the morphism defined by the complete linear system $|D|$.
We define $\dim \emptyset = -1$.

\subsection*{Acknowledgments}
The author would like to express his gratitude to Professor Yujiro Kawamata
for his valuable comments and advice.
He is also grateful to Professor Keiji Oguiso for giving him his private note.
He wishes to thank Professors Yoshinori Gongyo and Shinnosuke Okawa for
useful comments and answering many questions about Mori dream spaces.


\section{Proof of Theorem \ref{intro thm}}\label{section_proof}


In this section,
we prove Theorem \ref{intro thm}.
We will construct all the SQMs inductively by using the divisors $D_i,D'_j$.

\begin{defn}
Let $X$ be a normal $\Q$-factorial projective variety with Picard number $2$ and $\Pic(X)_{\Q} = N^1(X)_{\Q}$.
Fix a divisor or a line bundle $A \in N^1(X)_{\R} \setminus \{0\}$.
For nonzero effective divisors $D_1, D_2$ on $X$,
we denote
\[
D_1 \succeq_A D_2 \quad \text{if} \quad D_2 \in \Cone ( D_1, A) .
\]
\end{defn}

\begin{figure}[htbp]
 \begin{center}
\[
\begin{xy}
(0,0)="O",
(25,10)="A",(20,20)="B",
(10,25)="C",(40,50)="D",
(10,35)="E",
(5,0)="F",(40,0)="G",
(20,7)="I",(20,33)="J",
(20,0)="K",
(-20,0)="1",(30,0)="2",
(0,-10)="3",(0,35)="4",
(28,12)*{A},(22.5,22.5)*{D_2},
(12,28)*{D_1}

\ar "O";"A"
\ar "O";"B"
\ar "O";"C"
\end{xy}
\]
 \end{center}
 \caption{$D_1 \succeq_A D_2 $}
 \label{figure2}
\end{figure}
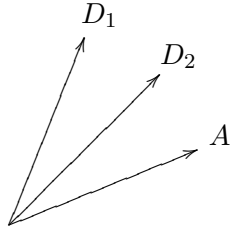

\begin{defn}\label{def_cond*}
Let $X$ be a normal $\Q$-factorial projective variety with Picard number $2$ and $\Pic(X)_{\Q} = N^1(X)_{\Q}$.
Let $D_1 , \ldots, D_{k+1} $ be nonzero effective divisors  on $X$ for $ 0 \leq k \leq \dim X -1$.
We say that $D_1 , \ldots, D_{k+1} $ satisfy condition $(*)$
if $D_1 \succeq_A \cdots \succeq_A D_{k+1} $ for an ample line bundle $A$
and the following conditions 1) - 4) hold.
\begin{itemize}
\item[1)] $D_{k+1}$ is semiample,
\item[2)] $D_{k+1} \not \in \Cone (D_1, \cdots , D_k) $,
\item[3)] $D_1 \cap \cdots \cap D_k \not = \emptyset$,
\item[4)] $\dim \varphi_{|m D_{k+1}|} (D_1 \cap \cdots \cap D_k) \leq \dim X -k -1$ for sufficiently divisible $m > 0$,
\end{itemize}
where we set $\Cone (D_1,\ldots,D_{k}) = \{0\}$ and $D_1 \cap \cdots \cap D_{k} =X$ when $k=0$.
\end{defn}

If $D_1 , \ldots, D_{k+1}$ satisfy condition $(*)$,
$D_{k+1} $ is nef but not ample as we will prove in the following lemma.
Hence $\r+ D_{k+1} $ is an edge of $\Nef(X)$.

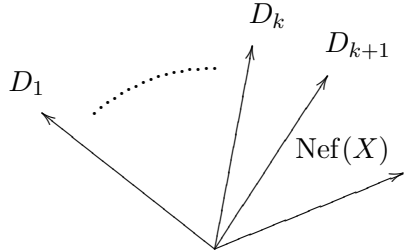
\begin{figure}[htbp]
 \begin{center}
\[
\begin{xy}
(0,0)="O",
(25,10)="A",(15,23)="B",
(5,27)="C",(-23,18)="D",
(10,35)="E",
(5,0)="F",(40,0)="G",
(20,7)="I",(20,33)="J",
(20,0)="K",
(-20,0)="1",(30,0)="2",
(0,-10)="3",(0,35)="4",
(19,27)*{D_{k+1}},(7,31)*{D_k},
(-25,22)*{D_1},
(17,13)*{\Nef(X)},
(-16,18);(0,24)

\ar "O";"A"
\ar "O";"B"
\ar "O";"C"
\ar "O";"D"

\crv{~*{.}(-9,24)}
\end{xy}
\]
 \end{center}
 \caption{Condition $(*)$}
 \label{figure3}
\end{figure}

\begin{lem}\label{rem_not_ample}
Let $X$ be a normal $\Q$-factorial projective variety with Picard number $2$ and $\Pic(X)_{\Q} = N^1(X)_{\Q}$.
Assume that divisors $D_1 , \ldots, D_{k+1} $ on $X$ satisfy condition $(*)$.
Then $D_{k+1}$ is not ample.

Furthermore,
if $k=0$, $D_{k+1}$ is not big.
If $k >0$, $D_{k+1}$ is big
and $\varphi_{|m D_{k+1}|} $ is an isomorphism on $X \setminus (D_1 \cap \cdots \cap D_k)$ for sufficiently divisible $m>0$.
\end{lem}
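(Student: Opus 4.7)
The plan is to treat the three claims in order, using Condition 4) to handle non-ampleness, and the Picard-rank-two geometry to handle bigness and the isomorphism-outside-$D_1\cap\cdots\cap D_k$ statement.

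First, by $\Q$-factoriality and Krull's Hauptidealsatz, any nonempty intersection of $k$ effective Weil divisors in $X$ (nonempty by Condition 3)) has dimension at least $\dim X-k$. If $D_{k+1}$ were ample, $\varphi_{|mD_{k+1}|}$ would be a closed embedding for $m\gg 0$ and would preserve dimensions of closed subsets, forcing $\dim\varphi_{|mD_{k+1}|}(D_1\cap\cdots\cap D_k)\ge\dim X-k$, contradicting Condition 4). This settles ``not ample.'' The $k=0$ case of ``not big'' is the same sort of numerical obstruction: a semiample big divisor $D_1$ would have $\dim\varphi_{|mD_1|}(X)=\dim X$ for $m\gg 0$, violating Condition 4).

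For $k\ge 1$ I would write, in $N^1(X)_{\R}$ (which has rank two), $D_{k+1}\equiv\alpha D_1+\beta A$ using the ordering $D_1\succeq_A D_{k+1}$, and then argue $\alpha,\beta>0$: $\alpha=0$ would make $D_{k+1}$ proportional to the ample $A$ (hence ample, contradicting the previous step, since $\Pic(X)_{\Q}=N^1(X)_{\Q}$), while $\beta=0$ would put $D_{k+1}\in\Cone(D_1)\subseteq\Cone(D_1,\ldots,D_k)$, violating Condition 2). Lifting the numerical relation to a $\Q$-linear one via $\Pic(X)_{\Q}=N^1(X)_{\Q}$ presents $D_{k+1}$ as $\beta A+\alpha D_1=(\text{ample})+(\text{effective})$, and Kodaira's lemma (or the direct estimate $h^0(mD_{k+1})\ge h^0(m\beta A)$) gives bigness.

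For the isomorphism claim, fix $m$ sufficiently divisible and let $\varphi:=\varphi_{|mD_{k+1}|}\colon X\to Y$; after replacing $Y$ by its normalization or passing through a Stein factorization, assume $Y$ is normal, so that Zariski's Main Theorem applies. It suffices to show every positive-dimensional fiber $F$ of $\varphi$ is contained in $D_1\cap\cdots\cap D_k$: ZMT will then identify the exceptional locus of the birational morphism $\varphi$ with the union of positive-dimensional fibers, so $\varphi$ is an isomorphism outside $D_1\cap\cdots\cap D_k$. On such an $F$ one has $D_{k+1}|_F\equiv 0$, and applying the decomposition argument to each $i\le k$ separately gives $D_{k+1}\equiv\alpha_i D_i+\beta_i A$ with $\alpha_i,\beta_i>0$, so $D_i|_F\equiv -(\beta_i/\alpha_i)A|_F$. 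If $F\not\subseteq D_i$, then either $F\cap D_i=\emptyset$, forcing $\calo_X(mD_i)|_F\cong\calo_F$ and hence $A|_F\equiv 0$ (impossible, as $A|_F$ is ample on a positive-dimensional $F$), or $D_i|_F$ is a nonzero effective Cartier divisor on $F$, forcing $D_i|_F\cdot A|_F^{\dim F-1}>0$ while the numerical relation forces the same number to equal $-(\beta_i/\alpha_i)A|_F^{\dim F}<0$. Either way a contradiction, so $F\subseteq D_i$ for every $i$. The main obstacle I anticipate is this last step---arranging a normal target for $\varphi$ so that ZMT is available, and handling both branches $F\cap D_i=\emptyset$ and $F\not\subseteq D_i$ with $F\cap D_i\ne\emptyset$ uniformly via $A|_F$-ampleness.
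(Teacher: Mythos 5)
Your proofs of the non-ampleness, of non-bigness for $k=0$, and of bigness for $k>0$ coincide with the paper's: the paper likewise combines Krull's principal ideal theorem with condition 4) to rule out ampleness, reads non-bigness for $k=0$ directly off condition 4), and for $k>0$ writes $D_{k+1}\sim_{\Q}\alpha_i D_i+\beta_i A$ with $\alpha_i,\beta_i>0$ forced by condition 2) together with the non-ampleness just established. Where you genuinely diverge is the final isomorphism claim. The paper's argument is a one-line linear-systems observation: since $mD_{k+1}\sim m\alpha_iD_i+m\beta_iA$ with $m\beta_iA$ very ample, the subsystem $m\alpha_iD_i+|m\beta_iA|$ of $|mD_{k+1}|$ separates points and tangent vectors away from $D_i$, so $\varphi_{|mD_{k+1}|}$ is an isomorphism off each $D_i$, hence off $D_1\cap\cdots\cap D_k$. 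You instead run Zariski's Main Theorem on the birational semiample fibration and show by intersection numbers ($D_i|_{F}\equiv-(\beta_i/\alpha_i)A|_{F}$ on a fiber component $F$, which is incompatible with $F\not\subseteq D_i$ whether $F$ meets $D_i$ or not) that every positive-dimensional fiber lies in $D_1\cap\cdots\cap D_k$. Both routes are correct; the paper's is shorter and avoids any normality/connectedness bookkeeping, while yours isolates the exceptional locus more explicitly and only uses numerical data of $A$ and the $D_i$ on fibers. The one point you should make explicit in your version is that for sufficiently divisible $m$ the map $\varphi_{|mD_{k+1}|}$ already has connected fibers and normal image (the semiample fibration), since you need connectedness of fibers to upgrade ``every positive-dimensional fiber lies in $D_1\cap\cdots\cap D_k$'' to ``$\varphi^{-1}(\varphi(D_1\cap\cdots\cap D_k))=D_1\cap\cdots\cap D_k$,'' which is what makes $\varphi$ an isomorphism on the full complement rather than on a possibly smaller saturated open set.
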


\begin{proof}
It holds that
\[
\dim \varphi_{|m D_{k+1}|} (D_1 \cap \cdots \cap D_k)  \leq \dim X- k -1 < \dim D_1 \cap \cdots \cap D_k,
\]
where the latter inequality follows from condition 3) and the Krull's principal ideal theorem
(note that $D_1, \ldots,D_k$ are $\Q$-Cartier).
Hence $D_{k+1}$ is not ample.

If $k=0$,
$D_{k+1}$ is not big by condition 4) and $D_1 \cap \cdots \cap D_k=X$.
If $k > 0$,
we have $D_{k+1} \sim_{\Q} \alpha_i D_i + \beta_i A$ for some $\alpha_i, \beta_i > 0$ and $1 \leq i \leq k$
by condition 2).
Hence $D_{k+1}$ is big and $\varphi_{|m D_{k+1}|}  $ is an isomorphism outside $D_i$ for any $1 \leq i \leq k$
since $m \beta_i A$ is very ample.
\end{proof}

\begin{lem}\label{exist_of edge}
Let $X$ be a normal $\Q$-factorial projective variety with Picard number $2$ and $\Pic(X)_{\Q} = N^1(X)_{\Q}$.
Let $A$ be an ample line bundle
and let $D_1 \succeq_A \cdots \succeq_A D_r $ be nonzero effective divisors  on $X$
such that $D_1 \cap \cdots \cap D_r =  \emptyset$ and $2 \leq r \leq \dim X$.
Then there exists $0 \leq k \leq r-1$ such that
$D_1,\ldots, D_{k+1}$ satisfy condition $(*)$.
\end{lem}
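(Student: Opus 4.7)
Set $V_j := D_1 \cap \cdots \cap D_j$ with $V_0 := X$, and let $k^*$ be the largest element of $\{0,1,\ldots,r-1\}$ with $V_{k^*} \neq \emptyset$. Such $k^*$ exists: $V_0 \neq \emptyset$, each $D_i$ is a nonzero effective divisor (so $V_1 = D_1 \neq \emptyset$), and $V_r = \emptyset$. Thus $1 \leq k^* \leq r-1$, and $V_{k^*+1} = \emptyset$. Starting from $k = k^*$, iteratively decrement $k$ as long as $k > 0$ and $[D_{k+1}]$ lies on the same ray as $[D_k]$ in $N^1(X)_\Q$; let $k$ denote the final value. By construction, either $k = 0$ or $[D_{k+1}] \not\propto [D_k]$, and the classes $[D_{k+1}], [D_{k+2}], \ldots, [D_{k^*+1}]$ all lie on a single ray. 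I claim $D_1,\ldots,D_{k+1}$ satisfy condition $(*)$.

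Conditions 2 and 3 are immediate from this choice. Condition 3 holds since $V_k \supseteq V_{k^*} \neq \emptyset$. For Condition 2: the $\succeq_A$-ordering forces $\Cone([D_1],\ldots,[D_k]) = \Cone([D_1],[D_k])$ (with convention $\{0\}$ when $k = 0$), while by the stopping rule $[D_{k+1}]$ lies strictly interior to $\Cone([D_k],[A])$, hence outside $\Cone([D_1],[D_k])$.

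For Condition 1, I first rule out $[D_{k+1}]$ lying on the ray of $[A]$: otherwise $D_{k^*+1}$ (on the same ray as $D_{k+1}$) would be ample, and an ample divisor must meet $V_{k^*}$ (of dimension $\geq \dim X - k^* \geq 1$, using $k^* \leq r-1 \leq \dim X - 1$), contradicting $V_{k^*+1} = \emptyset$. Next, for each $j \in \{1,\ldots,k^*+1\}$ and sufficiently divisible $n$, I show $\Bs|nD_{k+1}| \subseteq \Supp(D_j)$. If $[D_j]$ lies on the ray of $[D_{k+1}]$, a positive multiple of $D_j$ is a member of $|nD_{k+1}|$, giving the inclusion. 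Otherwise $[D_{k+1}] = \alpha_j[D_j] + \beta_j[A]$ with $\alpha_j,\beta_j > 0$, and all divisors of the form $n\alpha_j D_j + A_0$ with $A_0 \in |n\beta_j A|$ lie in $|nD_{k+1}|$; since $|n\beta_j A|$ is base-point-free for $n$ divisible enough, $\Bs|nD_{k+1}| \subseteq \Supp(D_j)$. Intersecting over all $j$ yields $\Bs|nD_{k+1}| \subseteq V_{k^*+1} = \emptyset$, so $D_{k+1}$ is semiample.

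The main obstacle is Condition 4, the bound on $\dim\varphi_{|mD_{k+1}|}(V_k)$. Let $\varphi: X \to Y$ be the morphism defined by $|mD_{k+1}|$ for $m$ sufficiently divisible, with $mD_{k+1} = \varphi^* H$ for $H$ ample on $Y$. For each $i \in \{k+1,\ldots,k^*+1\}$, the proportionality $[D_i] \propto [D_{k+1}]$, combined with surjectivity of $\varphi^*$ on sections (for divisible enough twists of $H$), produces an effective divisor $H''_i \subset Y$ in a positive-integer multiple of $[H]$ with $\varphi^{-1}(\Supp H''_i) = \Supp D_i$ as subsets. The key observation is that $\varphi(V_k) \cap \bigcap_{i=k+1}^{k^*+1} \Supp H''_i = \emptyset$: any point in this intersection would have a preimage in $V_k \cap D_{k+1} \cap \cdots \cap D_{k^*+1} = V_{k^*+1} = \emptyset$. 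Since each $H''_i$ is ample on $Y$, after discarding any $H''_i$ containing a full irreducible component of $\varphi(V_k)$ (which leaves the intersection unchanged), Krull's principal ideal theorem applied on each component gives $\dim\varphi(V_k) \leq (k^* - k + 1) - 1 = k^* - k \leq \dim X - k - 1$, as required. The delicate technical points are the surjectivity of $\varphi^*$ on sections and the bookkeeping for irreducible components and possible containments.
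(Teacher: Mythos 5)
Your proposal is correct and takes essentially the same approach as the paper: semiampleness of $D_{k+1}$ via the base-locus inclusion $\Bs|nD_{k+1}| \subseteq \Supp D_j$ for all $j$, and the bound in condition 4) by intersecting $\varphi(V_k)$ with ample divisors on the image whose pullbacks are the $D_i$ proportional to $D_{k+1}$, then invoking emptiness of $V_{k^*+1}$. The only difference is organizational: you characterize $k$ directly (largest $k^*$ with $V_{k^*}\neq\emptyset$, then the start of the maximal same-ray run ending at $D_{k^*+1}$), whereas the paper reaches the same index by a recursive descent (replacing $r$ by $k_1$ when $V_{k_1}=\emptyset$); the two selections agree.
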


\begin{proof}
Since $ D_i \succeq_A D_r$ for $1 \leq i \leq r$,
there exist rational numbers $\alpha_i,\beta_i \geq 0$ such that
$D_r \sim_{\Q} \alpha_i D_i + \beta_i A$.
For sufficiently divisible $m$,
$\Bs(|m D_r|) \subset D_i$
since $m D_r \sim m \alpha_i D_i +  m \beta_i A$ and $m \beta_i A $ is base point free.
Hence we have $\Bs(|m D_r|) \subset D_1 \cap \cdots \cap D_r = \emptyset$,
which means $D_r$ is semiample.
Set $Z$ be the image of $X$ by $\varphi:=\varphi_{|m D_r|}$ for sufficiently divisible $m$.

Let $0 \leq k_1 \leq r-1$ be the smallest $i$ such that $\r+ D_{i+1} = \r+ D_r$.
Since $\Pic(X)_{\Q} = N^1(X)_{\Q}$,
$D_i$ is $\Q$-linearly equivalent to a positive multiple of $D_r$ for each $k_1 +1 \leq i \leq r$.
Hence $\varphi_* (D_i) $ is an ample divisor on $Z$
and $D_i = \varphi^* \varphi_* (D_i)$ holds for each $k_1 +1 \leq i \leq r$.

\vspace{2mm}
First, we assume $D_1 \cap \cdots \cap D_{k_1} \not = \emptyset$ and show that
$D_1, \cdots , D_{k_1+1}$ satisfy condition $(*)$.
Since $D_{k_1+1}$ is $\Q$-linearly equivalent to a positive multiple of $D_r$,
$D_{k_1+1}$ is semiample.
By the choice of $k_1$,
$D_{k_1+1} \not \in \Cone (D_1, \cdots , D_{k_1}) $ holds.
Hence it is enough to show the inequality
$\dim \varphi (D_1 \cap \cdots \cap D_{k_1})  \leq \dim X -k_1 -1$.

Since $D_1 \cap \ldots \cap D_r = \emptyset $ and $D_{k_1+1}, \ldots, D_r$ are pullbacks of divisors on $Z$,
we have
\[
\varphi (D_1 \cap \cdots \cap D_{k_1}) \cap \varphi_* D_{k_1+1} \cap \cdots \cap \varphi_* D_r = \emptyset.
\]
Hence it holds that
\[
\dim \varphi (D_1 \cap \cdots \cap D_{k_1}) \leq r-k_1-1   \leq \dim X - k_1 -1
\]
since $\varphi_* D_{k_1+1} , \ldots,  \varphi_* D_r$ are ample divisors on $Z$.
Hence $D_1 , \ldots, D_{k_1+1}$ satisfy condition $(*)$.

\vspace{2mm}
Next,
we assume $D_1 \cap \cdots \cap D_{k_1}  = \emptyset$.
In this case,
we replace $r$ with $k_1$ and repeat the above argument, and obtain $0 \leq k_2 \leq k_1-1 $.
Since $r > k_1 > k_2 > \cdots \geq 0$,
this process must stop and we obtain $k$ as in the statement of this lemma.
\end{proof}

The following is the key proposition to construct SQMs inductively in the proof of Theorem \ref{intro thm}.

\begin{prop}\label{exist_SQM}
Let $X$ be a normal $\Q$-factorial projective variety with Picard number $2$ and $\Pic(X)_{\Q} = N^1(X)_{\Q}$.
Let $D_1 , \ldots, D_{k+1} $ be effective divisors on $X$ which satisfy condition $(*)$.

If $\dim D_1 \cap \cdots \cap D_k \leq \dim X - 2$,
there exists an SQM $X  \dashrightarrow X^{+}$ 
and an integer $0 \leq l \leq k-1$ such that
\begin{itemize}
\item $D^+_{k+1} $ is semiample,
\item $\Nef(X^+) = \r+ D^+_{l+1} + \r+ D^+_{k+1} $,
\item $D^+_1 , \ldots,  D^+_{l+1} $ satisfy condition $(*)$ on $X^+$, 
\end{itemize}
where $D^+_{i}$ is the strict transform of $D_i$ on $X^+$ for each $1 \leq i \leq k+1$.
In particular,
$\Nef(X^+)$ is spanned by two semiample divisors $D^+_{l+1} , D^+_{k+1} $.
\end{prop}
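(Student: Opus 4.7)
The plan is to produce $X^+$ as the flip of the small birational contraction associated to the semiample fibration of $D_{k+1}$, and then to identify $l$ and the new $(*)$-sequence by analyzing $\Nef(X^+)$ in the Picard rank $2$ setting.

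Since any effective divisor has codimension $1$, the hypothesis $\dim(D_1\cap\cdots\cap D_k)\le\dim X-2$ forces $k\ge 2$. By Lemma~\ref{rem_not_ample}, $D_{k+1}$ is big and semiample and $\varphi:=\varphi_{|mD_{k+1}|}\colon X\to Z$ (for sufficiently divisible $m$) is an isomorphism on $X\setminus(D_1\cap\cdots\cap D_k)$; so $\varphi$ is a small birational contraction. For any curve $C$ contracted by $\varphi$ and any $1\le i\le k$, the relation $D_{k+1}\sim_{\Q}\alpha_iD_i+\beta_iA$ with $\alpha_i,\beta_i>0$ (valid by condition $(*)$~2), as in the proof of Lemma~\ref{rem_not_ample}) gives $D_i\cdot C<0$, so each such $D_i$ is $\varphi$-antiample.

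The main step, which I expect to be the principal obstacle, is the construction of $X^+$. I set
\[
X^+:=\Proj_Z\bigoplus_{m\ge 0}\varphi_*\calo_X(mnD_1),
\]
choosing $n>0$ so that $nD_1$ is Cartier, and denote the structure morphism by $\varphi^+\colon X^+\to Z$. The heart of the matter is finite generation of this graded $\calo_Z$-algebra. The approach is to use the identity $n\alpha_1D_1\sim_{\Q}nD_{k+1}-n\beta_1A$: since $nD_{k+1}$ descends to an ample class on $Z$, the sheaves $\varphi_*\calo_X(mnD_1)$ are expressible, after twisting by a $\varphi$-pullback from $Z$, in terms of negative powers of the ample class $A$; combined with Serre vanishing on $X$ and the boundedness of the fibers of $\varphi$, this should yield finite generation. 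Granted this, $X\dashrightarrow X^+$ is an SQM, $\varphi^+$ is a small projective birational morphism, $X^+$ is $\Q$-factorial of Picard rank $2$, and $D_1^+$ is $\varphi^+$-ample.

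It then remains to verify the three conclusions. First, $D_{k+1}^+=(\varphi^+)^{*}\varphi_*D_{k+1}$ is the pullback of an ample divisor on $Z$, so semiample; since the $\varphi^+$-contracted curves are orthogonal to it, $D_{k+1}^+$ is nef but not ample and spans an edge of $\Nef(X^+)$. Pick any ample $A^+\in\Nef(X^+)$, which by Picard rank $2$ lies on the opposite side of $\r+ D_{k+1}^+$ from $A$ in $N^1(X)_{\R}$, and let $l+1$ be the largest index such that $D_{l+1}^+$ lies on the non-$A^+$ side of (or on the boundary of) $\Nef(X^+)$. Then $D_1^+,\dots,D_{l+1}^+$ forms a $\succeq_{A^+}$-chain and, since Picard rank is $2$, $\Nef(X^+)=\r+ D_{l+1}^+ +\r+ D_{k+1}^+$. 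Conditions $(*)$~1), 2) for $D_1^+,\dots,D_{l+1}^+$ on $X^+$ follow from this picture (the argument of the first paragraph applied on $X^+$ supplies the semiampleness of $D_{l+1}^+$), and conditions $(*)$~3), 4) are obtained by transporting the corresponding statements from $X$ along the codimension-one part of the SQM and then applying the argument of the proof of Lemma~\ref{exist_of edge} to the new semiample fibration on $X^+$.
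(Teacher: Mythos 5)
Your overall strategy --- realizing $X^+$ as the $D_1$-flip of the small contraction $\varphi=\varphi_{|mD_{k+1}|}$ --- targets the right birational object, and your preliminary observations ($k\ge 2$, $\varphi$ is small, each $D_i$ is $\varphi$-antiample) are correct. But there are two genuine gaps. The first is the one you flag yourself: finite generation of $\bigoplus_{m\ge0}\varphi_*\calo_X(mnD_1)$ is exactly the existence of a flip, and your sketch does not establish it. The relation $n\alpha_1D_1\sim_{\Q}nD_{k+1}-n\beta_1A$ does not let you compute $\varphi_*\calo_X(mnD_1)$ via the projection formula, because $A$ is not a pullback from $Z$; and Serre vanishing controls higher cohomology of large twists, not finite generation of a section algebra. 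You would also still need to prove that $\varphi^+$ is small and that $X^+$ is $\Q$-factorial, neither of which is automatic here since no klt or log Fano structure is assumed. The paper avoids all of this by an explicit construction: since $mD_k\sim ma_iD_i+mb_iD_{k+1}$ with $mb_iD_{k+1}$ free, it blows up the scheme $C=\bigcap_i ma_iD_i$, notes that $\phi^*(mD_k)-E$ becomes base point free upstairs, and takes $X^{\dagger}$ to be the image of $(\pi\circ\phi)\times\varphi_{|\phi^*(mD_k)-E|}$; smallness is then the concrete dimension count $\dim\phi^{\dagger}(E)\le\dim\pi(C)+k-1\le\dim X-2$ (using condition 3) of $(*)$ and the embedding of $E$ into $\P_C\bigl(\bigoplus_i\calo(mb_iD_{k+1})|_C\bigr)$), and $\Q$-factoriality is checked by hand from the fact that $D_k^+$ and $D_{k+1}^+$ are $\Q$-Cartier and span $N^1$.

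The second gap is more serious because it affects how the proof closes. You propose to obtain conditions 3) and 4) for $D_1^+,\dots,D_{l+1}^+$ by ``transporting the corresponding statements from $X$ along the codimension-one part of the SQM.'' Conditions 3) and 4) concern intersections of the divisors, and these are precisely what the SQM changes: on $X$ one has $D_1\cap\cdots\cap D_k\ne\emptyset$, whereas the essential new fact on $X^+$ is that $D_1^+\cap\cdots\cap D_k^+=\emptyset$. Only with this emptiness can Lemma \ref{exist_of edge} be invoked on $X^+$ to produce $l$ and a chain satisfying $(*)$ (and hence the identification of $\Nef(X^+)$ with $\r+ D^+_{l+1}+\r+ D^+_{k+1}$). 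Your proposal never proves this emptiness; the paper does so by describing $D_i^{\dagger}\cap\phi^{\dagger}(E)$ inside the projective bundle over $\pi(C)$ and intersecting the resulting divisors $H_i$, whose common intersection is empty. Without this step the inductive descent $k>l$ in the proof of Theorem \ref{intro thm} does not get off the ground.
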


\begin{proof}
In Step 1,
we construct $X^{\dagger}$,
which is isomorphic to $X$ in codimension $1$.
In Step 2,
we show that the normalization $X^{+}$ of $X^{\dagger}$ is an SQM of $X$.
In Step 3,
we show the existence of $l$ as in the statement.

\ \\
\textbf{Step 1.}
By 2) in condition $(*)$,
we can write
$D_{k} \sim_{\Q} a_i D_i + b_i D_{k+1}$ for some rational numbers $a_i > 0 , b_i \geq 0$ for each $1 \leq i \leq k$.
Fix a sufficiently divisible integer $m >0$
and take the scheme-theoretic intersection
\[
C : = \bigcap_{1 \leq i \leq k} m a_i D_i \subset X.
\]
Let $\phi : \widetilde{X} \arw X $ be the blow up along $C$
and let $E$ be the Cartier divisor on $\widetilde{X}$ such that $\phi^{-1} I_C = \calo_{\widetilde{X}} (-E)$
for the ideal sheaf $I_C$ of $C$.

Since $m D_k \sim m a_i D_i + m b_i D_{k+1}$ and $ m b_i D_{k+1}$ is base point free,
$\phi^* (m D_k) -E$ is base point free as well.
Consider the following commutative diagram.

\[
\xymatrix{
& \widetilde{X} \ar[dl]_{\phi} \ar[dr]^{\phi^{\dagger}} \ar@/^8mm/[ddrr]^f & & \\
 X \ar[dr]_{\pi} & & X^{\dagger} \ar[dl]^{\varpi^{\dagger}} \ar[dr]_{\pi^{\dagger}} &  \\
&  Z & & Z^{\dagger}   .\\
}\]
In the above diagram,
$\pi := \varphi_{|m D_{k+1}|}$,
$f :=\varphi_{|\phi^* (m D_k) -E|}$,
$Z=\pi(X)$,
$Z^{\dagger} = f(\widetilde{X})$,
$\phi^{\dagger} := (\pi \circ \phi) \times f : \widetilde{X} \arw Z \times Z^{\dagger}$,
$X^{\dagger} = \phi^{\dagger}(\widetilde{X})$,
and $ \varpi^{\dagger}$ and $\pi^{\dagger}$ are the first and second projections from $X^{\dagger} \subset  Z \times Z^{\dagger}$ respectively.

By Lemma \ref{rem_not_ample},
the restriction $ \pi |_{X \setminus C} : X \setminus C \arw Z \setminus \pi(C)$ is an isomorphism.
Since $\phi |_{ \widetilde{X} \setminus E} :  \widetilde{X} \setminus E 
\arw X \setminus C $ is also an isomorphism,
the restriction of $\varpi^{\dagger}$ on $X^{\dagger} \setminus \phi^{\dagger}(E)$ is an isomorphism onto $ Z \setminus \pi(C)$.
Hence $X \setminus C $ and $X^{\dagger} \setminus \phi^{\dagger}(E) $ are isomorphic.

By assumption,
we have $\codim_X C \geq 2 $ since $C=D_1 \cap \cdots \cap D_k$ as sets.
To show that $X $ and $X^{\dagger}$ are isomorphic in codimension 1,
it is enough to show that $\codim_{X^{\dagger}} \phi^{\dagger}(E) \geq 2$.
By definition,
there exists a natural surjection
\[
\bigoplus_{i=1}^k \calo_X(-m a_i D_i) |_C \arw I_{C} / I_{C}^2.
\]
Since $ m D_k -m a_i D_i \sim m b_i D_{k+1}$,
there exists an inclusion
\begin{align}
E \hookrightarrow \P_C \left( \bigoplus_{i=1}^k \calo_X(-m a_i D_i) |_C \right)
\cong  \P_C \left( \bigoplus_{i=1}^k \calo_X(m b_i D_{k+1}) |_C \right), \label{shiki_inclusion}
\end{align}
and we have
\begin{align}
\calo_{\widetilde{X}}( \phi^* mD_k - E) |_E \sim \calo_{\P_C \left( \bigoplus_{i=1}^k \calo_X(m b_i D_{k+1}) |_C \right)}(1) |_E \label{shiki_lin_eq}
\end{align}
under this inclusion.

From (\ref{shiki_inclusion}), (\ref{shiki_lin_eq}), and the definition of $\phi^{\dagger}$,
we have
\begin{align}
\phi^{\dagger} (E) \hookrightarrow \P_{\pi(C)} \left( \bigoplus_{i=1}^k \calo_Z (m b_i \overline{D}_{k+1}) |_{\pi(C)} \right) \label{shiki_inclusion2}
\end{align}
for a suitable scheme structure on $\pi(C)$,
where $\overline{D}_{k+1} $ is the divisor on $Z$ whose pullback on $X$ is $D_{k+1}$.
Hence 
\begin{align*}
\dim \phi^{\dagger}(E) &\leq \dim \P_{\pi(C)} \left( \bigoplus_{i=1}^k \calo_Z (m b_i \overline{D}_{k+1}) |_{\pi(C)} \right) \\
&=\dim \pi(C) + k-1 \leq \dim X^{\dagger}-2
\end{align*}
by 3) in condition $(*)$ since $\dim X^{\dagger} = \dim X$ and $C=D_1 \cap \cdots \cap D_k$ as sets.
Thus $ \codim_{X^{\dagger}} \phi^{\dagger}(E) \geq 2$ holds.
Hence $X$ and $X^{\dagger}$ are isomorphic in codimension $1$.

\ \\
\textbf{Step 2.}
Let $\nu : X^{+} \arw X^{\dagger}$ be the normalization.
Since $X^{\dagger} \setminus \phi^{\dagger}(E) \cong X \setminus C$ is normal,
$\nu$ is an isomorphism in codimension $1$.
Hence $ X  \dashrightarrow  X^+$ is also an isomorphism in codimension $1$.

To show that  $X^+$ is an SQM of $X$,
it suffices to see that $X^+$ is $\Q$-factorial.
Let $D^{\dagger}_i$ be the strict transform of $D_i$ on $X^{\dagger}$.
Since $D_{k+1} = \pi^* \overline{D}_{k+1}$
and $X$ and $X^{\dagger}$ are isomorphic in codimension $1$,
it holds that $D^{\dagger}_{k+1}= {\varpi^{\dagger}}^* \overline{D}_{k+1}$.
In particular,
$D^{\dagger}_{k+1} $ is $\Q$-cartier and semiample.
Since $ \varphi_{|\phi^* (m D_k) -E|} =f = \pi^{\dagger} \circ \phi^{\dagger}$,
there exists an ample divisor $H$ on $Z^{\dagger}$
such that $\phi^* (m D_k) -E=  f^*  H =  {\phi^{\dagger}}^{*}  {\pi^{\dagger}}^{*} H$.
Hence we have $  {\pi^{\dagger}}^{*} H=  {\phi^{\dagger}}_* (\phi^* (m D_k) -E ) = m D^{\dagger}_k$
since $E$ is contracted by $\phi^{\dagger} $ and $X  \dashrightarrow X^{\dagger}$ is an isomorphism in codimension $1$.
Thus $D^{\dagger}_k $ is $\Q$-cartier and semiample.

Let $D^{+}_i$ be the strict transform of $D_i$ on $X^{+}$ as in the statement of this lemma.
Since $\nu : X^+ \arw X^{\dagger}$ is an isomorphism in codimension $1$,
$D^+_k = \nu^* D^{\dagger}_k $ and $D^+_{k+1}= \nu^* D^{\dagger}_{k+1}$ hold.
Hence $D^+_k $ and $D^+_{k+1}$ are $\Q$-cartier and semiample.
Fix a prime divisor $D^+$ on $X^+$.
Since $D_k$ and $D_{k+1}$ span $N^1(X)_{\Q} = \Pic(X)_{\Q}$ as a $\Q$-vector space,
we can write $D \sim_{\Q} a D_k + b D_{k+1} $ for some $a,b \in \Q$,
where $D$ is the strict transform of $D^+$ on $X$.
Since $X  \dashrightarrow  X^+$ is an isomorphism in codimension $1$,
we have $D^+ \sim_{\Q} a D^+_k + b D^+_{k+1} $,
which means $D^+$ is $\Q$-Cartier.
Hence $X^+$ is $\Q$-factorial.

\ \\
\textbf{Step 3.}
First,
we show $D^+_1 \cap \cdots \cap D^+_k = \emptyset$.
Since $\nu(D^+_i)=D^{\dagger}_i$,
it is enough to show $D^{\dagger}_1 \cap \cdots \cap D^{\dagger}_k = \emptyset $.
By Step 1,
$X \setminus (D_1 \cap \cdots \cap D_k) = X \setminus C \cong X^{\dagger} \setminus \phi^{\dagger}(E)$.
Hence we have $(D^{\dagger}_1 \cap \cdots \cap D^{\dagger}_k) \setminus \phi^{\dagger}(E)=\emptyset $.
Therefore it suffices to see $(D^{\dagger}_1 \cap \cdots \cap D^{\dagger}_k) \cap \phi^{\dagger}(E)=\emptyset $.

For $1 \leq i \leq k$,
we have an effective Cartier divisor $H_i$ on $\P_{\pi(C)} \left( \bigoplus_{j=1}^k \calo_Z (m b_j \overline{D}_{k+1}) |_{\pi(C)} \right)$
by the natural projection
\[
\displaystyle \bigoplus_{j=1}^k  \calo_Z (m b_j \overline{D}_{k+1})
\arw \bigoplus_{j \not = i } \calo_Z (m b_j \overline{D}_{k+1}).
\]
By construction,
$D^{\dagger}_i \cap \phi^{\dagger}(E) =H_i \cap \phi^{\dagger}(E)$ holds as sets under the inclusion (\ref{shiki_inclusion2}).
Since $\bigcap_{i=1}^k H_i = \emptyset$,
we have
\[
\bigcap_{i=1}^{k} D^{\dagger}_i \cap \phi^{\dagger}(E)= \bigcap_{i=1}^{k} H_i \cap \phi^{\dagger}(E) = \emptyset.
\]
Hence $D^{\dagger}_1 \cap \cdots \cap D^{\dagger}_k$ is empty,
and so is $D^+_1 \cap \cdots \cap D^+_k$.

Since $D^+_{k}$ and $D^+_{k+1}$ are semiample by Step 2,
we can take an ample line bundle $A^+$ on $X^+$ such that $A^+ \in \Cone (D^+_k,D^+_{k+1})$.
Then we have $D^+_1 \succeq_{A^+} \cdots \succeq_{A^+} D^+_k $.
Since $D^+_1 \cap \cdots \cap D^+_k = \emptyset$,
we can apply Lemma \ref{exist_of edge} to $D^+_1, \ldots, D^+_k $
and obtain $ 0 \leq l \leq k-1$ such that
$D^+_1 , \ldots,  D^+_{l+1} $ satisfy condition $(*)$.
In particular,
$ D^+_{l+1}$ is semiample but not ample.
Since $D^+_{k+1}$ is semiample but not ample as well,
$\Nef(X^+) $ is spanned by two semiample divisors $D^+_{l+1} $ and $ D^+_{k+1}  $,
that is, $\Nef(X^+) = \r+ D^+_{l+1} + \r+ D^+_{k+1} $ holds.
\end{proof}

\begin{proof}[\bf{Proof of Theorem \ref{intro thm}}]

First,
we show that
if we relabel $D_1,\ldots,D_r$ and $D'_1,\ldots,D'_{r'} $ if necessary,
there exist integers $0 \leq k \leq r -1 $, $ 0 \leq k' \leq r'-1 $ such that
$D_1,\ldots,D_{k+1}$ and $D'_1,\ldots,D'_{k'+1}$ satisfy condition $(*)$ respectively.
In particular,
$D_{k+1}$ and $D'_{k'+1} $ are semiample and
$\Nef(X) = \r+ D_{k+1} + \r+ D'_{k'+1}  $.
Hence
$X$ satisfies ii) in Definition \ref{def_MDS}.

Let $A$ be an ample line bundle on $X$.
Assume $A \not \in \Cone (D_1,\ldots,D_{r})$.
By relabeling $D_1,\ldots,D_r$ if necessary,
we can apply Lemma \ref{exist_of edge} to $D_1,\ldots,D_r$ by the assumption b).
Hence there exists $ 0 \leq k \leq r-1$ such that $D_1,\ldots,D_{k+1}$ satisfy condition $(*)$.
Since $D_{k+1}$ is nef and not contained in $\Cone(D'_1,\ldots, D'_{r'})$,
there exists an ample line bundle $A' \not \in  \Cone(D'_1,\ldots, D'_{r'})$ by the assumption a).
Applying Lemma \ref{exist_of edge} to $D'_1,\ldots, D'_{r'}$,
we obtain $0 \leq k' \leq r'-1 $ such that $D'_1,\ldots,D'_{k'+1}$ satisfy condition $(*)$.
Since $ \r+ D_{k+1} \not = \r+ D'_{k'+1}$,
we have $\Nef(X) =  \r+ D_{k+1} + \r+ D'_{k'+1} $.

If $A \in \Cone (D_1,\ldots,D_{r})$,
then $A \not \in \Cone(D'_1,\ldots, D'_{r'})$ by the assumption a).
Hence we can apply the same argument.

\vspace{2mm}
To prove that $X$ is a Mori dream space,
it is enough to show that $X$ satisfies iii) in Definition \ref{def_MDS}.
In the rest of the proof,
we investigate the edges of $\Mov(X)$.

\ \\
\textbf{Case 1.}
First,
we consider the case $\dim D_1 \cap \cdots \cap D_{k} = \dim X$,
i.e.,
$k=0$.
In this case,
$D_1=D_{k+1}$ is semiample and not big by Lemma \ref{rem_not_ample}.
Hence $\r+ D_1$ is an edge of both $\Eff (X)$ and $\Mov(X)$.

\ \\
\textbf{Case 2.}
Next,
we consider the case $\dim D_1 \cap \cdots \cap D_{k} = \dim X -1$.
By Lemma \ref{rem_not_ample},
$\pi:=\varphi_{|m D_{k+1}|}$ is a birational morphism for sufficiently divisible $m$.
Fix a prime divisor $E$ on $X$ contained in $D_1 \cap \cdots \cap D_{k}$.
Then $E$ is contracted by $\pi$ since $\dim \pi (D_1 \cap \cdots \cap D_{k})  \leq \dim X -2$ by $3)$ in condition $(*)$ and $k \geq 1$.
We show that $\r+ D_{k+1}$ and $\r+ E$ are edges of $\Mov (X)$ and $\Eff (X)$ respectively.

Any prime divisor on $X$ contracted by $\pi$ is $\Q$-linearly equivalent to $a E$ for some $a > 0$
since $X$ is $\Q$-factorial, $N^1(X)_{\Q}=\Pic(X)_{\Q}$, and the Picard number is $2$.
On the other hand,
$E$ is not movable since $E$ is an exceptional divisor of $\pi$.
Hence $E$ is the unique exceptional divisor of $\pi$.

Let $Z$ be the image of $X$ by $\pi$.
For any prime divisor $F$ on $Z$,
the strict transform $\widetilde{F}$ of $F$ on $X$ is $\Q$-linearly equivalent to $\alpha D_{k+1} + \beta E$ for some $\alpha,\beta \in \Q$.
Since $\pi$ contracts $E$,
it holds that $F = \pi_* \widetilde{F} \sim_{\Q} \pi_* (\alpha D_{k+1} + \beta E) =\alpha \pi_*  D_{k+1} $.
Since ${\pi }_* D_{k+1}$ is $\Q$-Cartier,
$Z$ is $\Q$-factorial
and $\Pic (Z)_{\Q}$ is generated by ${\pi }_* D_{k+1}$.

For a prime divisor $D \not = E$ on $X$,
$\pi_*(D)$ is a prime divisor since $D$ is not contracted by $\pi$.
Thus we have $\pi^{*} \pi_{*}(D) \sim_{\Q} b \pi^{*} {\pi }_* D_{k+1} \sim b D_{k+1}$ for some $b > 0$.
Since $\pi^{*} \pi_*(D) -D $ is effective
and its support is contained in the exceptional locus of $\pi$,
we have $\pi^{*} \pi_*(D) -D \sim_{\Q} a E$ for some $a \geq  0$.
Thus $ D \sim_{\Q} -a E + b D_{k+1}$.
Hence $\r+ E$ is an edge of $\Eff (X)$ and
any movable divisor is contained in $- \r+ E + \r+ D_{k+1}$.
Thus $\r+ D_{k+1}$ is an edge of $\Mov(X)$.

\ \\
\textbf{Case 3.}
We consider the case $\dim D_1 \cap \cdots \cap D_{k} \leq \dim X -2$.
We set $X^{(0)}=X$.
In this case,  we can apply Proposition \ref{exist_SQM} to $D_1 , \ldots , D_{k+1}$
and obtain an SQM $X^{+}$
and $D^{+}_1,\ldots,D^{+}_{l+1}$ as in Proposition \ref{exist_SQM} for some $ 0 \leq l \leq k-1$.
Set $X^{(1)}=X^+$, $D^{(1)}_i = D^{+}_i$, and $k_1=l$.
If $\dim D^{(1)}_1 \cap \cdots \cap D^{(1)}_{k_1} \geq \dim X^{(1)} -1$,
we are reduced to Cases 1 or 2.
Hence $\r+ D_{k_1+1}$ is an edge of $\Mov(X^{(1)})=\Mov(X)$.

If $\dim D^{(1)}_1 \cap \cdots \cap D^{(1)}_{k_1} \leq \dim X^{(1)} -2$,
we can apply Proposition \ref{exist_SQM} to $D^{(1)}_1 , \ldots , D^{(1)}_{k_1+1}$
and obtain another SQM $X^{(2)}$ and $D^{(2)}_1,\ldots,D^{(2)}_{k_2+1}$ for some $ 0 \leq k_2 \leq k_1-1$.
Since $k > k_1 > k_2 > \cdots \geq 0$ is a decreasing sequence of non-negative integers,
we reach Cases 1 or 2 after repeating this process finitely many times (say, $m$ times)
and obtain SQMs $X^{(1)}, \ldots, X^{(m)}$
and an edge $\r+ D_{k_m +1}$ of $\Mov(X)$ for some $0 \leq k_m \leq k-1$.
By Proposition \ref{exist_SQM},
$\Nef(X^{(j)}) = \r+ D^{(j)}_{k_j+1} + \r+ D^{(j)}_{k_{j-1}+1}$
and $D^{(j)}_{k_j+1} , D^{(j)}_{k_{j-1}+1}$ are semiample for $1 \leq j \leq m$,
where we set $k_0=k$.

\vspace{2mm}
In Cases 1 and 2,
$\r+ D_{k+1}$ is an edge of $\Mov(X)$.
Following Case 3,
we set $m=0$, $k_0=k$, and $X^{(0)}=X$ in Cases 1 and 2.
Then $\r+ D_{k_m+1}$ is an edge of $\Mov(X)$ in Cases 1, 2, 3.

\vspace{3mm}
Applying the same argument to $D'_1,\cdots,D'_{k'+1}$,
we obtain SQMs $X'^{(0)}, \ldots, X'^{(m')}$ for some $m' \geq 0$
(note $ X= X^{(0)} = X'^{(0)}$ is the identity SQM)
and another edge $\r+ D'_{k'_{m'}+1}$ of $\Mov(X)$.
Then $\Mov(X)=\r+ D_{k_m+1} + \r+ D'_{k'_{m'}+1}$ is the union of $\Nef(X^{(j)}), \Nef(X'^{(j')})$ for $0 \leq j \leq m, 0 \leq j' \leq m'$.
Since the nefcone of each SQM is spanned by two semiample divisors,
$X$ satisfies iii) in Definition \ref{def_MDS}.
\end{proof}


\section{Examples}\label{section_example}


Corollaries \ref{cor 1}, \ref{blowup of Fano} are special cases of Corollary \ref{cor_c.i.}.
To show Corollary \ref{cor_c.i.},
we give divisors satisfying conditions a), b) in Theorem \ref{intro thm} explicitly. 
To clarify the idea,
we prove Corollary \ref{cor 1} first.

\begin{proof}[\bf{Proof of Corollary \ref{cor 1}}]
Since $\dim X = n \geq 3$,
it holds that $\Pic (X) \cong \Pic (\P^1 \times \P^n)$
by the Lefschetz hyperplane theorem.
We denote by $\calo_X(k,l)$ the restriction of $\calo_{\P^1 \times \P^n}(k,l)$ on $X$.

Let $u, v$ be homogeneous coordinates on $\P^1$.
Since $H^0(\P^1 \times \P^n, \calo(a,b) ) = H^0(\P^1,\calo(a))  \otimes H^0(\P^n,\calo(b))$,
$X$ is the zero section of
\[
u^a f_0 + u^{a-1} v f_1 + \cdots + v^a f_a
\]
for some general $f_i \in H^0(\P^n,\calo(b))$.
Set $W:=(f_0= \cdots = f_a=0) \subset \P^n$.
Since $f_i$ are general,
$\dim W =n -a-1 \geq -1$.

\vspace{1mm}
Let $\overline{D}_i \sim  \calo_X(i-1, b)  $ be the effective divisor on $X$ defined by
\[
(u^{i-1} f_0 +u^{i-2} v f_1 +  \cdots + v^{i-1} f_{i-1}) |_X 
\]
for $1 \leq i \leq a$.
Since
\[
u^{a-i+1} (u^{i-1} f_0 +u^{i-2} v f_1 +  \cdots + v^{i-1} f_{i-1}) |_X = - v^i (u^{a-i} f_i+ \cdots + v^{a-i} f_a) |_X,
\]
$D_i := \overline{D}_i  - (v^i=0)|_X \sim \calo_X(-1,b)$ is an effective divisor on $X$
for each $1 \leq i \leq a$.
By the definition of $D_i$,
we have
\[
D_1 \cap \cdots \cap D_a = (p_2^* f_0 = \cdots = p_2^* f_a =0) = p_2^{-1}(W) = \P^1 \times W,
\]
where $p_2 : \P^1 \times \P^n \arw \P^n$ is the second projection.
Choose general members $\overline{D}_{a+1}, \ldots, \overline{D}_{n} $ in $|\calo_{\P^n} (1)|$
and set $D_i = (p_2^* \overline{D}_i ) |_X$ for $a+1 \leq i \leq n$.
Since $W \cap \overline{D}_{a+1} \cap \cdots \cap \overline{D}_{n} = \emptyset$,
we have $D_1 \cap \cdots \cap D_n = \emptyset$.

If $D'_1, D'_2$ are general members in $| \calo_X(1,0) |$,
we have $D'_1 \cap D'_2= \emptyset$.
Since $D_1 , \ldots, D_n $ and $D'_1, D'_2$ satisfy a), b) in Theorem \ref{intro thm},
$X$ is a Mori dream space.
\end{proof}

Corollary \ref{cor 1} can be generalized by a similar argument as follows.

\begin{cor}\label{cor_c.i.}
Let $Y$ be a smooth Mori dream space with Picard number $1$,
and let $A$ be a base point free line bundle on $Y$.
Let $\pi : \P=\P_Y(\calo_Y \oplus A) \arw Y$ be the $\P^1$-bundle on $Y$ and  let $\calo_{\P}(1)$ be the tautological line bundle on $\P$.
Let $X = D^1 \cap \cdots \cap D^s \subset \P$ be a complete intersection on $\P$ of general divisors $D^j \in |\calo_{\P}(a_j) \otimes \pi^*B_j|$,
where $a_1, \ldots,a_s$ are positive integers and $B_1,\ldots,B_s$ are ample and base point free line bundles on $Y$.
If $\dim X \geq \max \{ \sum_{j=1}^s a_j ,3 \} $,
$X$ is a Mori dream space.
\end{cor}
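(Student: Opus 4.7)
The plan is to adapt the strategy of the proof of Corollary \ref{cor 1} to the $\P^1$-bundle setting, exhibiting effective divisors $D_1,\ldots,D_r$ and $D'_1,D'_2$ on $X$ satisfying the hypotheses of Theorem \ref{intro thm}, built from the coefficients of the defining equations of $X$ together with the two distinguished sections of $\pi$.

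First I would verify the standing hypotheses of Theorem \ref{intro thm}. Since $A$ is base point free and each $B_j$ is ample, each line bundle $\calo_\P(a_j)\otimes\pi^*B_j$ is ample on $\P$, so a generic complete intersection $X$ is smooth by Bertini and hence $\Q$-factorial. Iterated Lefschetz together with $\dim X\geq 3$ yields $\Pic(X)\cong\Pic(\P)=\Z\xi\oplus\Z\pi^*H_Y$, where $\xi=c_1(\calo_\P(1))$ and $H_Y$ generates $\Pic(Y)\cong\Z$; in particular $X$ has Picard number $2$ and $\Pic(X)_\Q=N^1(X)_\Q$. Write $A\equiv aH_Y$ and $B_j\equiv b_jH_Y$ with integers $a\geq 0$ and $b_j\geq 1$.

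Next I would use the splitting $\pi_*\calo_\P(a_j)=\bigoplus_{k=0}^{a_j}A^k$ to expand the defining section of $D^j$ as $F_j=\sum_{k=0}^{a_j}u^{a_j-k}v^kf_{j,k}$ with $f_{j,k}\in H^0(Y,A^k\otimes B_j)$ generic, where $u\in H^0(\calo_\P(1))$ and $v\in H^0(\calo_\P(1)\otimes\pi^*A^{-1})$ are the distinguished sections coming from the two summands of $\calo_Y\oplus A$. For each $j$ and $1\leq i\leq a_j$, setting $\overline F_{j,i}=\sum_{k=0}^{i-1}u^{i-1-k}v^kf_{j,k}$ and $G_{j,i}=\sum_{k=0}^{a_j-i}u^{a_j-i-k}v^kf_{j,k+i}$ gives $F_j=u^{a_j-i+1}\overline F_{j,i}+v^iG_{j,i}$, and $F_j|_X=0$ yields the relation $u^{a_j-i+1}\overline F_{j,i}|_X=-v^iG_{j,i}|_X$. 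As in Corollary \ref{cor 1}, since $v$ is a local equation for $\{v=0\}|_X$ and $u$ is nowhere zero along $\{v=0\}|_X$, this relation shows that $\{\overline F_{j,i}=0\}|_X$ contains $i\{v=0\}|_X$, so $D_{j,i}:=\{\overline F_{j,i}=0\}|_X-i\{v=0\}|_X$ is an effective divisor on $X$ of class $-\xi+\pi^*(A^i\otimes B_j)|_X$, namely coordinates $(-1,\,ia+b_j)$ in the basis $(\xi,\pi^*H_Y)|_X$. I would complete the first collection by appending $\dim X-\sum_ja_j$ general pullbacks $\pi^*\tilde D_k$ with $\tilde D_k\in|H_Y|$, of coordinates $(0,1)$. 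For the second collection I would take $D'_1=\{u=0\}|_X$ and $D'_2=\{v=0\}|_X$, the restrictions of the two disjoint sections of $\pi$, of coordinates $(1,0)$ and $(1,-a)$.

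Condition a) of Theorem \ref{intro thm} is immediate from the class data, since the first cone lies in the half-plane $\xi\leq 0$ while the second lies in $\xi\geq 0$, meeting only at the origin. Condition b) for the $D'$-side follows from $\{u=0\}\cap\{v=0\}=\emptyset$ in $\P$. For the $D$-side, I would stratify $X=X_\circ\sqcup(\{u=0\}\cap X)\sqcup(\{v=0\}\cap X)$ with $X_\circ=X\cap\{u,v\neq 0\}$ and compute on each stratum. The key observation is that on all three strata the intersection $\bigcap_{j,i}D_{j,i}$ is identified (using $\{u=0\}\cong Y\cong\{v=0\}$) with the subset $\bigcap_jW_j\subset Y$, where $W_j:=\bigcap_{k=0}^{a_j}V(f_{j,k})$; this combines a direct calculation on $X_\circ$ with the facts that $\{u=0\}\cap X\cong\bigcap_jV(f_{j,a_j})$ and $\{v=0\}\cap X\cong\bigcap_jV(f_{j,0})$, so the coefficients "missing" from the restricted $\overline F_{j,i}$ and $G_{j,i}$ are exactly those restored by the equations defining the sections. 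Intersecting with the pullback divisors, each stratum then reduces to $\bigcap_jW_j\cap\bigcap_k\tilde D_k\subset Y$, a complete intersection of $\sum_j(a_j+1)+(\dim X-\sum_ja_j)=\dim Y+1$ general members of base point free linear systems on $Y$, which is empty by Bertini. This gives $\bigcap_mD_m=\emptyset$, and Theorem \ref{intro thm} yields the conclusion.

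The main obstacle will be the stratum-by-stratum analysis of $\bigcap_{j,i}D_{j,i}$ along the two sections $\{u=0\}\cap X$ and $\{v=0\}\cap X$: different coefficients $f_{j,k}$ drop out of $\overline F_{j,i}$ and $G_{j,i}$ upon restriction, and one must check that the section equations restore exactly those coefficients so that all three strata reduce to the same subset $\bigcap_jW_j\subset Y$, and can therefore be cleared simultaneously by a single set of pullback divisors $\pi^*\tilde D_k$.
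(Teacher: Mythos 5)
Your proposal is correct and follows essentially the same route as the paper: the same divisors $D_i^j$ obtained by subtracting $i\,(v=0)|_X$ from the truncated sections, the same $\dim X-\sum_j a_j$ general pullbacks from $Y$, the same pair $D'_1=(u=0)|_X$, $D'_2=(v=0)|_X$, and the same appeal to Theorem \ref{intro thm}. Your stratum-by-stratum verification that $\bigcap_{i,j}D^j_i=\pi^{-1}\bigl(\bigcap_{i,j}(f^j_i=0)\bigr)$ along $\{u=0\}$ and $\{v=0\}$ is exactly the detail the paper compresses into ``similarly as in the proof of Corollary \ref{cor 1}.''
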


\begin{proof}
Set $n=\dim X$.
Since $A$ is nef and $B_j$ is ample,
$D^j$ is ample.
Hence $ \Pic (X) \cong \Pic \P = \Z \calo_{\P}(1) \oplus \pi^{*} \Pic Y$ by the Lefschetz hyperplane theorem and $n \geq 3$.

Let $u \in H^0(\P, \calo_{\P}(1))$ and $v \in H^0(\P,\calo_{\P}(1) \otimes \pi^* A^{-1})$
be the sections corresponding to
the first and second summands of $\calo_Y \oplus A$ respectively.

Take $f^j \in H^0(\P, \calo_{\P}(a_j) \otimes \pi^*B_j)$ which defines $D^j$.
Since
\[
H^0(\P, \calo_{\P}(a_j) \otimes \pi^*B_j) \cong H^0(Y, \Sym^{a_j} (\calo \oplus A) \otimes B_j) \cong
\bigoplus_{i=0}^{a_j} H^0(Y,A^{\otimes i} \otimes B_j),
\]
we can write
\[
f^j = u^{a_j} f_0^j + u^{a_j-1} v f_1^j + \cdots + v^{a_j} f_{a_j}^j
\]
for some $ f_i^j \in H^0(Y, A^{\otimes i} \otimes B_j)$.
Since $X$ is general,
each $f_i^j$ is a general section of the base point free line bundle $ A^{\otimes i} \otimes B_j$.
For $1 \leq i \leq a_j$,
we define a divisor $D_i^j \sim \calo_{\P}(-1) \otimes \pi^* (A^{\otimes i} \otimes B_j) \, |_X $ on $X$ to be
\[
D_i^j = ( u^{i-1} f_0^j +u^{i-2} v f_1^j +  \cdots + v^{i-1} f_{i-1}^j = 0) |_X - (v^i =0) |_X ,
\]
which is effective as in the proof of Corollary \ref{cor 1}.
Similarly as in the proof of Corollary \ref{cor 1},
we have
\[
\bigcap_{1 \leq i \leq a_j, 1 \leq j \leq s} D_i^j = \pi^{-1} \left( \bigcap_{0 \leq i \leq a_j, 1 \leq j \leq s} (f_i^j=0) \right).
\]
Since $f_i^j$ are general and $\dim Y = n+ s -1$, it holds that
\[
\dim \bigcap_{i,j} (f_i^j=0) = \dim Y - \sum_j (a_j+1) = n -\sum a_j -1.
\]
Choose general hypersurfaces $\overline{D}_{1}, \ldots, \overline{D}_{ n -\sum a_j} $ on $Y$
and set $D_i = (\pi^* \overline{D}_i )|_X$ for $1 \leq i \leq  n -\sum a_j$.
Since $\overline{D}_{1} \cap \cdots \cap \overline{D}_{n -\sum a_j} \cap \bigcap_{i,j} (f_i^j=0)  = \emptyset$,
we have
\[
D_1 \cap \cdots \cap D_{n -\sum a_j} \cap  \bigcap_{1 \leq i \leq a_j, 1 \leq j \leq s} D_i^j = \emptyset.
\]

We set $D'_1=(u=0) |_X \sim \calo_{\P}(1) |_X $, $D'_2= (v=0) |_X \sim \calo_{\P}(1) \otimes \pi^* A^{-1} |_X$.
Then we have $D'_1 \cap D'_2= \emptyset$.
Since $D_1 , \ldots, D_{ n -\sum a_j} , \{ D_i^j \}_{i,j}$ and $D'_1, D'_2$ satisfy a), b) in Theorem \ref{intro thm},
$X$ is a Mori dream space.
\end{proof}

\begin{proof}[\bf{Proof of Corollary \ref{blowup of Fano}}]
When $\dim X=2$,
$X$ is a Del Pezzo surface.
Hence $X$ is a Mori dream space.

Thus we may assume $\dim X = \dim Z \geq 3$.
Set $n=N-s$.
For the blow up $\mu : \widetilde{\P}^{n+s} \arw \P^{n+s}$ at a general point $p \in Z$,
$X$ is a complete intersection on $\widetilde{\P}^{n+s} $
of general hypersurfaces in $|\mu^* \calo(d_1) - E| , \ldots, |\mu^* \calo(d_s) -E|$ since $Z$ and $p$ are general.
Let
\[
\pi  : \widetilde{\P}^{n+s} = \P_{\P^{n+s-1}} (\calo \oplus \calo(1)) \arw \P^{n+s-1}
\]
be the $\P^1$-bundle obtained from $|\mu^* \calo(1) - E |$.
Since $ \mu^* \calo(1)$ is the tautological bundle of $\pi$ and $ \mu^* \calo(1) -E = \pi^* \calo_{\P^{n+s-1}}(1)$,
we can apply Corollary \ref{cor_c.i.} to $Y=\P^{n+s-1}$, $A = \calo_{\P^{n+s-1} }(1)$,
$a_j = d_j-1$, and $B_j = \calo_{\P^{n+s-1} }(1)$ if $\dim X = n \geq \sum (d_j-1) $.
This condition is nothing but the ampleness of $-K_Z$ by the adjunction formula. 
\end{proof}

\begin{rem}
If $Z$ is not Fano,
Corollary \ref{blowup of Fano} does not hold in general.
For example,
$X$ is not a Mori dream space if $Z$ is a very general quartic surface in $\P^3$
and $p \in Z$ is a very general point by \cite[Proposition 6.3]{AL}.
We note that Proposition 6.3 in \cite{AL} claims that $X$ is not a Mori dream space for {\it some} $Z$ and $ p$,
but their proof works for very general $Z,p$.
\end{rem}

By checking the proof of Theorem \ref{intro thm} carefully,
we can explicitly describe cones in $N^1(X)_{\R}$ for Corollaries \ref{cor 1}, \ref{blowup of Fano}, or \ref{cor_c.i.}.
We illustrate the description by a special case of Corollary \ref{blowup of Fano}.
We leave the other cases to the reader.

\begin{eg}
Let $Z \subset \P^{n+1}$ be a general hypersurface of degree $n+1$ for $n \geq 3$.
Let $\mu : X \arw Z$ be the blow up at a general point $p \in Z$
and let $E$ be the exceptional divisor.
We set $H= \mu^* \calo_Z(1)$.
By the proofs of Corollaries \ref{blowup of Fano} and \ref{cor_c.i.},
we have effective divisors $D_i \sim i H -(i+1)E$ on $X$ for $1 \leq i \leq n$
such that $D_1 \cap \cdots \cap D_n = \emptyset$.
It is easy to see
$D_1,\ldots,D_n$ satisfy condition $(*)$ (see the proof of Lemma \ref{exist_of edge}),
hence $\Nef(X) = \r+ D_n + \r+ H$ as in the first paragraph of the proof of Theorem \ref{intro thm}.
In this case,
$k$ in the proof of Theorem \ref{intro thm} is $n-1$.

Since $ \dim D_1 \cap \cdots \cap D_{n-1} = 1 \leq \dim X -2$,
we obtain an SQM $X^{(1)}$ of $X^{(0)} :=X$ by Case 3 in the proof of Theorem \ref{intro thm}.
As in the proof of Theorem \ref{intro thm},
we denote by $D_i^{(j)}$ the strict transform of $D_i$ on $X^{(j)}$
(note that this $D_i^{(j)}$ is different from $D_i^j$ in the proof of Corollary \ref{cor_c.i.}).
By the proof of Proposition \ref{exist_SQM},
$k_1$ in Case 3 in the proof of Theorem \ref{intro thm} is $n-2$ in this case.
Hence $\Nef(X^{(1)}) = \r+ D_{n-1}^{(1)} + \r+ D_n^{(1)}$ holds.
By repeating this process,
we obtain an SQM $X^{(j)}$ for each $1 \leq j \leq n-2$
such that $D_1^{(j)}, \ldots, D_{n-j}^{(j)}$ satisfy condition $(*)$
and $\Nef(X^{(j)}) = \r+ D_{n-j}^{(j)} + \r+ D_{n+1-j}^{(j)}$.
On $X^{(n-2)}$,
we reach Case 2 in the proof of Theorem \ref{intro thm}.
Thus $D_1^{(n-2)}$ and $D_2^{(n-2)}$ are edges of $\Eff(X^{(n-2)})=\Eff(X) $ and $\Mov(X^{(n-2)})=\Mov(X)$ respectively.

From the above argument,
we have the following description: 

\begin{align*}
\Nef(X) = \Nef(X^{(0)}) &= \r+ D_n + \r+ H, \\
\Nef(X^{(j)}) &= \r+ D_{n-j} + \r+ D_{n+1-j} \quad \text{ for } \ 1 \leq j \leq n-2, \\
\Mov(X) &= \bigcup_{j=0}^{n-2} \Nef(X^{(j)}) = \r+ D_2 + \r+ H, \\
\Eff(X) &= \r+ D_1 + \r+ E.
\end{align*}

\end{eg}

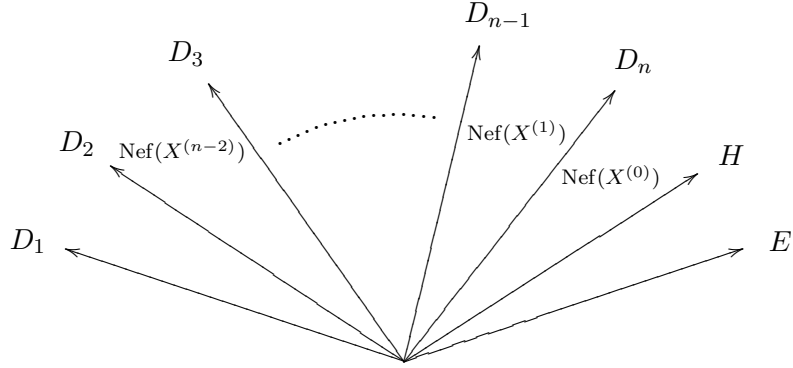
\begin{figure}[htbp]
 \begin{center}

\[
\begin{xy}
(0,0)="O",
(-45,15)="A",
(-39,26)="B",
(-26,37)="C",
(10,42)="D",
(28,36)="E",
(39,25)="F",
(45,15)="G",
(-29.5,28)*{{\scriptstyle \Nef(X^{(n-2)})}},
(27.5,24.5)*{ {\scriptstyle \Nef(X^{(0)})}},
(15,30.5)*{{\scriptstyle \Nef(X^{(1)})}},
(50,16)*{E},
(43.5,27.5)*{H},
(30.5,40)*{D_{n}},
(12.5,46)*{D_{n-1}},
(-29,41)*{D_3},
(-43.5,29)*{D_2},
(-50,16)*{D_1},
(-16.5,29);(4,32.5)

\ar "O";"A"
\ar "O";"B"
\ar "O";"C"
\ar "O";"D"
\ar "O";"E"
\ar "O";"F"
\ar "O";"G"

\crv{~*{.}(-7,34)}
\end{xy}
\]

 \end{center}
 \caption{Cones in $N^1(X)$}
 \label{figure2}
\end{figure}


\begin{thebibliography}{Kol}


\bibitem[AHL]{AHL}
M.~Artebani, J.~Hausen, and A.~Laface, 
\emph{On Cox rings of K3 surfaces}, 
Compos. Math. \textbf{146} (2010), no. 4, 964--998.


\bibitem[AL]{AL}
M.~Artebani and A.~Laface, 
\emph{Cox rings of surfaces and the anticanonical Iitaka dimension}, 
 Adv. Math. \textbf{226} (2011), no. 6, 5252--5267.




\bibitem[BCHM]{BCHM}
C.~Birkar, P.~Cascini, C.~Hacon, and J.~M\textsuperscript{c}Kernan,
\emph{Existence of minimal models for varieties of log general type},
J. Amer. Math. Soc. \textbf{23} (2010), no. 2, 405--468.



\bibitem[HK]{HK}
Y.~Hu and S.~Keel, 
\emph{Mori dream spaces and GIT}, 
Michigan Math. J. \textbf{48} (2000), 331--348. 



\bibitem[Jo]{Jo}
S-Y.~Jow, 
\emph{A Lefschetz hyperplane theorem for Mori dream spaces}, 
Math. Z. \textbf{268} (2011), no. 1--2, 197--209.




\bibitem[Og]{Og}
K.~Oguiso, 
\emph{Mori dream hypersurfaces in the product of projective spaces}, 
private note. 



\bibitem[Ot]{Ot}
J.C.~Ottem, 
\emph{Birational geometry of hypersurfaces in products of projective spaces}, 
arXiv:1305.0537.



\bibitem[TVV]{TVV}
D.~Testa, A.~V\'arilly-Alvarado, and M.~Velasco, 
\emph{Big rational surfaces}, 
Math. Ann. \textbf{351} (2011), no. 1, 95--107.






\end{thebibliography}
\end{document}